\theoremstyle{plain}
\newtheorem{theorem}{Theorem}[section]
\newtheorem{corollary}{Corollary}[section]
\theoremstyle{definition}
\newtheorem{definition}[theorem]{Definition}
\theoremstyle{remark}
\newtheoremstyle{named}{}{}{\itshape}{}{\bfseries}{.}{.5em}{\thmnote{#3}#1}
\theoremstyle{named}
\newcommand{\br}{\mathrm{br}}
\title{Petal diagram from simple braids}
\author{Zipei Nie\thanks{Part of the work was done when the second author visited Institut des Hautes Études Scientifiques.}}
\affil{
  LMCRC, Huawei\\
\texttt{niezipei@huawei.com}
}
\date{\today}
\begin{document} 
\maketitle
\begin{abstract}
   We construct petal diagrams from simple braids. This approach allows us to confirm a conjecture proposed by Kim, No and Yoo, which states that the petal number of the nontrivial torus knot $T_{r,s}$ ($r<s$) is at most $2s-2\lfloor\frac{s}{r}\rfloor+1$. As a consequence, we deduce that the petal number of a nontrivial torus knot $T_{r,s}$ is equal to $2s-1$ if and only if $r<s<2r$.
\end{abstract}
\section{Introduction}
A petal diagram, as introduced in \cite{adams2015knot}, is a special knot diagram. In contrast to the standard knot diagram, which has multiple crossings, each involving one overstrand and one understrand, the petal diagram has a single multi-crossing \cite{adams2013triple}, at which multiple strands intersect. Additionally, a petal diagram forms a topological rose. According to \cite[Corollary 2.3]{adams2015knot}, every knot admits a petal diagram. The petal number of a knot is defined as the minimum number of petals of the topological rose formed by a petal diagram.

The petal numbers of nontrivial torus knots are of particular interest. By \cite[Theorem 4.15]{adams2015knot}, the petal number of the nontrivial torus knot $T_{r,s}$ is at most $2s-1$ when $s\equiv 1\pmod{r}$ and at most $2s+3$ when $s\equiv -1\pmod{r}$. The proof involves the explicit construction of petal diagrams, and the result successfully determines the petal number of $T_{r,r+1}$. Moreover, Lee and Jin \cite{lee2021petal} constructed a petal diagram for $T_{r,r+2}$ and thereby determined its petal number. These results were further refined \cite{kim2022petal} by Kim, No and Yoo. They proved that the petal number of $T_{r,s}$ is at most $2s-2\lfloor \frac{s}{r} \rfloor+1$ when $s\equiv \pm 1\pmod{r}$, and used the superbridge index to determine the petal number of $T_{r,s}$ when $r<s$ and $r\equiv 1\pmod{s-r}$. Furthermore, they proposed a conjecture that their upper bound of the petal number applies to any nontrivial torus knot $T_{r,s}$ with $r<s$. This paper aims to prove this conjecture.

A (positive) simple braid, also known as a permutation braid or a non-repeating braid, is a positive braid in which each pair of strands crosses at most once. There exists a bijection between the symmetric group $S_n$ and the set of simple braids on $n$ strands, as shown in \cite[Lemma 9.10]{epstein1992word}. We denote the simple braid corresponding to the permutation $\pi \in S_n$ as $\br(\pi)$. Let $\mathrm{Inv}(\pi)$ denote the inversion set of $\pi$. When $\mathrm{Inv}(\pi_1\pi_2)$ contains $\mathrm{Inv}(\pi_2)$, we have $\br(\pi_1)\br(\pi_2)=\br(\pi_1\pi_2)$. 

Our main contribution is the construction of petal diagrams from simple braids. We prove the following theorem in Section~\ref{sec:2}.

\begin{theorem}\label{main-1}
    For any permutations $\pi_1,\pi_2\in S_n$, the petal number of the closure of the braid \[\delta_n\br(\pi_1)^{-1}\br(\pi_1^{-1})^{-1} \br(\pi_2^{-1})\br(\pi_2)\] is at most $2n+1$, where $\delta_n$ is the simple braid \[\br\left(\left(\begin{array}{ccccc}
1 & 2 & \cdots & n-1 & n  \\
2 & 3 & \cdots & n & 1 
\end{array}\right)\right).\]
\end{theorem}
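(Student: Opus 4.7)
The plan is to construct the petal diagram explicitly from the braid, with the $2n+1$ petals arising as follows: each of the $n$ braid strands contributes two arcs meeting at the central multi-crossing, and one additional arc comes from the cyclic shift induced by $\delta_n$ in the closure.

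The first step is to analyze the structure of the braid. Writing it as $\delta_n \cdot (\br(\pi_1^{-1})\br(\pi_1))^{-1} \cdot \br(\pi_2^{-1})\br(\pi_2)$, I observe that each product $\br(\pi_k^{-1})\br(\pi_k)$ has trivial underlying permutation. A direct tracing argument shows that in such a product, a pair of strands $(i,j)$ with $i<j$ either is not crossed at all or is crossed exactly twice, forming a positive ``bubble''; the pair is bubbled precisely when $(i,j) \in \mathrm{Inv}(\pi_k^{-1})$. Inverting flips the signs, so the full braid is the cyclic shift $\delta_n$ followed by a controlled collection of negative bubbles (from the $\pi_1$ factor) and positive bubbles (from the $\pi_2$ factor) at specified pairs of strands.

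The second step is to take the closure and lay it out as a curve that winds around a central axis in an $n$-fold spiral, with the bubbles appearing as localized crossings between specific pairs of windings. The third step is to deform this spiral into a planar rose with all crossings concentrated at a single central multi-crossing. Each of the $n$ windings contributes two arcs through the central projection point, giving $2n$ arcs; the way the windings connect cyclically through $\delta_n$ contributes one further arc -- needed to make the total odd, so the closure is a single knot rather than a link. Altogether this yields a petal diagram with $2n+1$ petals; a sanity check at $n=2$, $\pi_1 = \mathrm{id}$, $\pi_2 = (12)$ recovers the standard $5$-petal diagram of the trefoil, since the braid simplifies to $\sigma_1^3$.

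The main obstacle is verifying that the heights at the central multi-crossing define a valid permutation of $\{1, 2, \ldots, 2n+1\}$ encoding the correct knot type. This requires a careful combinatorial argument: the inversion sets of $\pi_1^{-1}$ and $\pi_2^{-1}$ dictate which pairs of arcs at the central point must cross and with what signs, and the bubbles must all be realizable simultaneously via a single consistent height function on the arcs. The non-repeating property of simple braids -- each pair of strands crosses at most once in $\br(\pi)$ -- is essential here, since it guarantees that these over/under assignments are mutually consistent and produce an honest petal diagram of the closure of the original braid.
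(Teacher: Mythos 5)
You correctly predict the shape of the construction --- each braid strand should pass twice through a single multi-crossing, giving $2n$ arcs, with one more arc supplied by the closure through $\delta_n$ --- and your observation that $\br(\pi^{-1})\br(\pi)$ is a collection of doubled crossings (``bubbles'') supported on the inversions of $\pi$ is true. But the step you label ``the main obstacle'' is the entire mathematical content of the theorem, and your write-up describes what would need to be verified without verifying it. The difficulty is structural: at a single multi-crossing of $k$ strands, \emph{every} pair of the $k$ arcs crosses exactly once there, with all over/under information encoded by one height order; your bubble normal form, in which each pair of strands crosses zero or two times, is not of that shape, and you give no mechanism for converting one into the other. In particular, the conversion forces a half twist $\Delta_n$ to appear (the positive half twist is the braid in which every pair of strands crosses exactly once, hence the natural ``reference'' at a multi-crossing), and nothing in your argument identifies this or explains how the inversion data of $\pi_1$ and $\pi_2$ determines the required height function on $2n+1$ arcs.

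The paper closes exactly this gap with a characterization of braids admitting a single multi-crossing (its Theorem~2.2): such braids are precisely those of the form $\Delta_n^{-1}\br(\pi^{-1})\br(\pi)$, where $\pi$ records the heights of the strands at the crossing. Given that lemma, $\Delta_n^{-1}\br(\pi_2^{-1})\br(\pi_2)$ and the inverse braid $\br(\pi_1)^{-1}\br(\pi_1^{-1})^{-1}\Delta_n$ each admit a one-multi-crossing diagram; concatenating them cancels the two half twists and realizes $\br(\pi_1)^{-1}\br(\pi_1^{-1})^{-1}\br(\pi_2^{-1})\br(\pi_2)$ with exactly two multi-crossings, which are then merged into one by folding the first diagram onto the second, after which the standard diagram of $\delta_n$ and the closure contribute a single additional arc whose height can be placed between the two groups of $n$ strands. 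To rescue your proof you would need to establish an equivalent of this characterization and exhibit the explicit height permutation on $\{1,\dots,2n+1\}$; as written, the proposal stops exactly where the proof has to begin. (A minor additional point: whether the bubbled pairs are indexed by $\mathrm{Inv}(\pi_k)$ or $\mathrm{Inv}(\pi_k^{-1})$ depends on whether you label strands at the outer endpoints or at the interface of the two simple factors; this is precisely the kind of bookkeeping the deferred combinatorial argument would have to get right.)
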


In Section~\ref{sec:3}, we represent a nontrivial torus knot as the closure of a braid, and use Theorem~\ref{main-1} to establish the following upper bound on the petal number.

\begin{theorem}\label{main-2}
    Let $r$ and $s$ be two relatively prime numbers with $1<r<s$. The petal number of the torus knot $T_{r,s}$ is at most $2s-2\left\lfloor\frac{s}{r}\right\rfloor+1$.
\end{theorem}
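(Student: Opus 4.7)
The plan is to realize $T_{r,s}$ as the closure of a braid of the form appearing in Theorem~\ref{main-1} on $n := s - \lfloor s/r \rfloor$ strands; then the bound $2n+1 = 2s - 2\lfloor s/r \rfloor + 1$ follows immediately. Write $s = qr + t$ with $1 \le t \le r - 1$ (possible since $r > 1$ and $\gcd(r,s) = 1$), so that $n = q(r-1) + t$. The task reduces to exhibiting permutations $\pi_1, \pi_2 \in S_n$ for which the closure of $\delta_n\,\br(\pi_1)^{-1}\br(\pi_1^{-1})^{-1}\br(\pi_2^{-1})\br(\pi_2)$ is $T_{r,s}$.

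A natural target is a positive braid representation of $T_{r,s}$ on $n$ strands, obtained from the standard braid $(\sigma_1\cdots\sigma_{r-1})^s \in B_r$ by $n - r$ positive Markov stabilizations. This suggests taking $\pi_1 = e$ and fixes $|\mathrm{Inv}(\pi_2)| = (r-1)(s-1)/2$, the genus of $T_{r,s}$, from matching the writhe $n - 1 + 2|\mathrm{Inv}(\pi_2)| = s(r-1) + (n-r)$. The two extremal cases fit this framework cleanly: for $s = r+1$ (so $q = t = 1$, $n = r$), letting $\pi_2$ be the longest element of $S_n$ gives the braid $\delta_n \Delta_n^2 = \delta_r^{r+1}$, whose closure is $T_{r,r+1}$; for $r = 2$ (so $n = (s+1)/2$), taking $\pi_2 = \delta_n^{-1}$ produces a positive braid of the form $\delta_n^2(\sigma_1\sigma_2\cdots\sigma_{n-1})$, which $n - 2$ Markov destabilizations reduce to $\sigma_1^s \in B_2$.

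For general $r, s$ I would build $\pi_2 \in S_n$ so that it distributes the $q$ full twists from $\Delta_r^{2q}$ across blocks of $n = q(r-1) + t$ strands together with a residual part of size $t$, mirroring the decomposition $(\sigma_1\cdots\sigma_{r-1})^s = \Delta_r^{2q}(\sigma_1\cdots\sigma_{r-1})^t$. The verification then consists of applying the braid relations together with $n - r$ Markov destabilizations to check that the closure of the constructed braid is $T_{r,s}$.

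The main obstacle is pinning down $\pi_2$ uniformly in $q$ and $t$ and executing the chain of braid manipulations and destabilizations needed to identify the closure; the combinatorial parallel $n = q(r-1) + t$ versus $s = qr + t$ strongly suggests a block-structured recipe, but the braid calculus to confirm it is intricate. Once this identification is in place, Theorem~\ref{main-1} immediately gives the bound $2n+1$.
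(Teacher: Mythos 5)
Your plan correctly identifies the target: realize $T_{r,s}$ as a closure of the Theorem~\ref{main-1} form on $n=s-\lfloor s/r\rfloor$ strands, and indeed the paper ends up with $\pi_1$ trivial and a single permutation $\pi_0\in S_n$, so your guess $\pi_1=e$ and your writhe count $|\mathrm{Inv}(\pi_2)|=(r-1)(s-1)/2$ are consistent with what actually happens. The two extremal cases you check ($s=r+1$ and $r=2$) are also correct. But the proposal stops exactly where the proof has to begin: there is no construction of $\pi_2$ for general $r,s$, and no argument that the closure of the resulting braid is $T_{r,s}$. You acknowledge this yourself ("the main obstacle is pinning down $\pi_2$ uniformly in $q$ and $t$"), and that obstacle is the entire technical content of the theorem, so as it stands this is a research plan rather than a proof.

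It is also worth noting that the paper does not follow the "block-structured recipe on $n$ strands" you envision. Instead it works on all $s$ strands: it writes the standard $s$-strand braid for $T_{r,s}$ as a product of two simple braids $\br(\pi_1)\br(\pi_2)$, conjugates by the simple braid of $\pi_4\pi_3^{-1}$ where $\pi_3(i)=ri\pmod s$, and then, through a chain of factorizations of simple braids governed by inclusions of inversion sets, rewrites the result as $\delta_s\br(\pi_3^{-1}\pi_5\pi_1\pi_6)\br(\pi_6^{-1}\pi_1^{-1}\pi_5^{-1}\pi_3)$. The arithmetic fact $\pi_3(i)=s-r(s-i)\ge r+1$ for $i\ge s-\lfloor s/r\rfloor+1$ is what forces the last $\lfloor s/r\rfloor$ strands to be trivial, after which Markov destabilization lands on $\delta_n\br(\pi_0^{-1})\br(\pi_0)$. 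If you want to complete your approach, the missing ingredient is precisely some analogue of this conjugation by $\pi_3$ (multiplication by $r$ modulo $s$), which is what converts the standard presentation into one with visibly destabilizable strands; without it, matching writhes and checking special cases does not establish the identification of the closure.
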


Combined with the lower bound \cite[Theorem 2]{kim2022petal} derived from the superbridge index, we determine the petal number of torus knots $T_{r,s}$ with $r < s < 2r$, thus improving upon \cite[Theorem 1.2]{lee2021petal} and \cite[Theorem 1]{kim2022petal}.

\begin{corollary}
    The petal number of a nontrivial torus knot $T_{r,s}$ is equal to $2s-1$ if and only if $r< s <2r$.
\end{corollary}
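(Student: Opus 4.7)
The plan is to combine the upper bound of Theorem~\ref{main-2} with the superbridge lower bound of \cite[Theorem 2]{kim2022petal}, handling the two directions of the biconditional separately. Throughout I write $p(T_{r,s})$ for the petal number and may assume $r \ge 2$ and $\gcd(r,s)=1$, since otherwise the knot is trivial.

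For the ``if'' direction, assume $r < s < 2r$. Then $1 < s/r < 2$, so $\lfloor s/r \rfloor = 1$ and Theorem~\ref{main-2} gives $p(T_{r,s}) \le 2s - 1$. For the matching lower bound, I would invoke the superbridge estimate of \cite[Theorem 2]{kim2022petal}: in this range it yields $p(T_{r,s}) \ge 2s - 1$, reflecting Kuiper's computation that the superbridge index of $T_{r,s}$ is $\min(2r,s)=s$ whenever $s<2r$, combined with the general inequality relating petal and superbridge numbers. The two bounds then pinch $p(T_{r,s})$ to $2s-1$.

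For the ``only if'' direction, I would argue contrapositively. Suppose $s \ge 2r$; coprimality together with $r \ge 2$ forces the strict inequality $s > 2r$, hence $\lfloor s/r \rfloor \ge 2$. Theorem~\ref{main-2} then gives
\[
p(T_{r,s}) \;\le\; 2s - 2\left\lfloor \tfrac{s}{r} \right\rfloor + 1 \;\le\; 2s - 4 + 1 \;=\; 2s - 3,
\]
so $p(T_{r,s}) \ne 2s-1$. Hence $p(T_{r,s}) = 2s-1$ forces $s < 2r$.

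No substantive obstacle is anticipated: once Theorem~\ref{main-2} is established, this corollary is a clean bookkeeping argument that splits on whether $\lfloor s/r \rfloor$ equals one or is at least two. The only point that requires care is verifying that the cited superbridge bound delivers exactly $2s-1$ throughout the interval $r<s<2r$, which is precisely the role assigned to \cite[Theorem 2]{kim2022petal} in the paragraph preceding the corollary.
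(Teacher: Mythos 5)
Your argument for the two cases you treat coincides with the paper's: for $r<s<2r$ you pinch the petal number between the upper bound of Theorem~\ref{main-2} (with $\lfloor s/r\rfloor=1$) and the superbridge lower bound of \cite[Theorem 2]{kim2022petal}, using Kuiper's value $\min(2r,s)=s$ of the superbridge index; for $s>2r$ you use $\lfloor s/r\rfloor\ge 2$ to get the upper bound $2s-3$. Both of these steps are correct, including the observation that coprimality with $r\ge 2$ rules out $s=2r$.

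There is, however, a genuine gap in your ``only if'' direction. The corollary is stated for an arbitrary nontrivial torus knot $T_{r,s}$, with no normalization $r<s$, so the negation of ``$r<s<2r$'' is ``$s<r$ or $s>2r$,'' not merely ``$s\ge 2r$.'' You silently assume $r<s$ and never address the case $s<r$. In that case Theorem~\ref{main-2} does not apply as stated (its hypothesis is $1<r<s$), and even after exchanging the roles of $r$ and $s$ it only supplies an \emph{upper} bound, which cannot rule out the value $2s-1$; what is needed is a lower bound exceeding $2s-1$. The paper closes this case by invoking Kuiper's theorem that the superbridge index of $T_{r,s}$ with $s<r$ equals $\min(r,2s)$, so that \cite[Theorem 2]{kim2022petal} gives petal number at least $2\min(r,2s)-1$, which is strictly greater than $2s-1$ because $\min(r,2s)>s$ when $s<r$. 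Adding this third case would complete your proof.
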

\begin{proof}   
    We consider three scenarios:
    \begin{enumerate}[label=(\alph*)]
        \item Suppose that $s < r$. According to \cite[Theorem B]{kuiper1987new}, the superbridge index of $T_{r,s}$ is $\min(r,2s)$. By \cite[Theorem 2]{kim2022petal}, the petal number is at least $2\min(r,2s)-1$, which is greater than $2s-1$.

\item Suppose that $r < s < 2r$. According to \cite[Theorem B]{kuiper1987new}, the superbridge index of $T_{r,s}$ is $s$. Then by Theorem~\ref{main-2} and \cite[Theorem 2]{kim2022petal}, the petal number is equal to $2s-1$.

\item Suppose that $s > 2r$. By Theorem~\ref{main-2}, the petal number is at most $2s-3$.
    \end{enumerate}
\end{proof}

\section{Construction of petal diagrams}\label{sec:2}
We consider the braid analogue of petal diagrams.

\begin{definition}
    A braid diagram is called a star diagram if and only if it is connected and has a single multi-crossing.
\end{definition}

A star diagram always forms a topological star. Its isotopy class is determined by the relative $z$-coordinates of the strands. We give a characterization of the braids admitting star diagrams.

\begin{theorem}\label{star_diagram}
    A braid on $n$ $(n\ge 2)$ strands admits a star diagram if and only if it can be represented as \[\Delta_n^{-1}\br(\pi^{-1})\br(\pi)\] for some permutation $\pi\in S_n$, where $\Delta_n$ is the positive half-twist on $n$ strands.
\end{theorem}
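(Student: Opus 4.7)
The plan is to rewrite the target braid in a form where its connection to star diagrams becomes transparent. Starting from the identity $\br(\pi)\br(\pi^{-1}w_0)=\br(w_0)=\Delta_n$, which holds because $\ell(\pi)+\ell(\pi^{-1}w_0)=\ell(w_0)$, I would swap $\pi\leftrightarrow\pi^{-1}$ to obtain $\br(\pi^{-1})=\Delta_n\br(\pi w_0)^{-1}$, so that
\[
\Delta_n^{-1}\br(\pi^{-1})\br(\pi)=\br(\pi w_0)^{-1}\br(\pi).
\]
The right-hand side is a negative simple braid with $\binom{n}{2}-\ell(\pi)$ generators followed by a positive simple braid with $\ell(\pi)$ generators, totaling $\binom{n}{2}$ crossings---exactly the count expected from a single multi-crossing on $n$ strands. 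A quick inversion-set computation then shows that each pair $(A,B)$ with $A<B$ crosses exactly once in this word: negatively (inside the first factor) when $\pi^{-1}(A)<\pi^{-1}(B)$ and positively (inside the second factor) otherwise; the overall underlying permutation is $w_0$.

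Both directions of the theorem can then be treated with one geometric picture. I would first observe that any star diagram must have underlying permutation $w_0$: since the $n$ strands fan into the unique multi-crossing and fan out again without further crossings in projection, top position $i$ is forced to connect to bottom position $n+1-i$. For the \emph{if} direction, given $\pi\in S_n$, I would construct a star diagram by assigning the strand from top position $i$ the $z$-height $\pi^{-1}(i)$ at the central multi-crossing. Each pair of strands crosses exactly once at the center with the higher-$z$ strand on top, so pair $(A,B)$ with $A<B$ receives a positive crossing precisely when $\pi^{-1}(A)>\pi^{-1}(B)$---the very sign pattern of $\br(\pi w_0)^{-1}\br(\pi)$. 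To upgrade this matching of ``crossing data'' to an equality of braid group elements, I would resolve the multi-crossing by a concrete perturbation---for instance, sliding each strand's passage through the vertical line $x=x_0$ upward by an amount proportional to its $z$-height---and read off a braid word that, after applying the braid relations, factors as $\br(\pi w_0)^{-1}\br(\pi)$. For the \emph{only if} direction, any star diagram provides such a height permutation $\pi$ at its multi-crossing, and the same calculation then identifies its braid as $\br(\pi w_0)^{-1}\br(\pi)=\Delta_n^{-1}\br(\pi^{-1})\br(\pi)$.

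The main obstacle will be the explicit verification that the word produced by the chosen perturbation reduces to $\br(\pi w_0)^{-1}\br(\pi)$: one must describe carefully the order in which the $\binom{n}{2}$ pairwise crossings emerge as the strands separate, and then manipulate the resulting word using the braid relations to collect the negative crossings before the positive ones. A cleaner alternative, which I would pursue if the direct computation becomes unwieldy, is to establish a ``signed Matsumoto'' lemma: any two braids on $n$ strands with underlying permutation $w_0$, in which every pair of strands crosses exactly once and the sign of each pair's unique crossing is prescribed, represent the same element of the braid group. With such a lemma in hand, the sign-pattern match from the first paragraph concludes the proof without any word-level calculation.
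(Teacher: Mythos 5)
Your primary route is, in substance, the paper's own proof: the algebraic reduction $\Delta_n^{-1}\br(\pi^{-1})\br(\pi)=\br(\pi w_0)^{-1}\br(\pi)$ via $\br(\pi^{-1})\br(\pi w_0)=\br(w_0)=\Delta_n$ is exactly what the paper uses (its $\pi_0$ is your $\pi w_0$), and the perturbation you sketch is the paper's move of shifting each strand's passage through the multi-crossing by an amount determined by its $z$-rank. What you flag as the ``main obstacle'' is actually a non-issue for precisely this perturbation: after the shift, the half of the diagram on one side of the (now separated) midpoints is literally the standard straight-line picture of the simple braid $\br(\pi)$ and the other half is that of $\br(\pi w_0)^{-1}$ --- all crossings of each sign are already segregated into their own factor, each factor is a diagram in which every pair of strands crosses at most once with over/under data induced by a single linear order, and the identification with a permutation braid is then the defining property of simple braids. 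No bookkeeping of the order in which the $\binom{n}{2}$ crossings emerge, and no braid-relation manipulation, is required. Had you committed to this specific perturbation and invoked the standard characterization of simple braids, the argument would close.

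The fallback you offer instead contains a genuine error: the ``signed Matsumoto'' lemma is false for arbitrary sign prescriptions. In $B_3$, the words $\sigma_1\sigma_2^{-1}\sigma_1$ and $\sigma_2\sigma_1^{-1}\sigma_2$ both have underlying permutation $w_0$, both cross each pair of strands exactly once, and both realize the same sign pattern $(+,-,+)$ on the pairs $\{1,2\},\{1,3\},\{2,3\}$, yet they are distinct elements: under $\sigma_1\mapsto\left(\begin{smallmatrix}1&1\\0&1\end{smallmatrix}\right)$, $\sigma_2\mapsto\left(\begin{smallmatrix}1&0\\-1&1\end{smallmatrix}\right)$ their images in $SL_2(\mathbb{Z})$ are $\left(\begin{smallmatrix}2&3\\1&2\end{smallmatrix}\right)$ and $\left(\begin{smallmatrix}2&-1\\-3&2\end{smallmatrix}\right)$. (Equivalently, their equality would force $\sigma_1^2\sigma_2^2=\sigma_2^2\sigma_1^2$, which fails.) The lemma can only be true when the sign pattern is induced by a linear order on the strands --- the pattern $(+,-,+)$ is exactly a ``cyclic'' one that no $z$-ordering can produce, which is why this does not contradict the theorem --- and even that restricted version must be proved by checking the six order-induced sign triples against the braid relation, e.g.\ $\sigma_1^{-1}\sigma_2^{-1}\sigma_1=\sigma_2\sigma_1^{-1}\sigma_2^{-1}$. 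So if you pursue the second route, you must add the linear-order hypothesis and supply that verification; as stated, the lemma is refuted by the example above.
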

\begin{proof}
   We depict the braid diagram from left to right horizontally; see Figure~\ref{fig:1}.

\begin{figure}[!htbp] 
\centering
\begin{tikzpicture}   
\draw (1,-4) node[left] {$s_4$};
\draw (1,-3) node[left] {$s_3$};
\draw (1,-2) node[left] {$s_2$};
\draw (1,-1) node[left] {$s_1$};

\begin{knot}[
consider self intersections,clip width=10,
flip crossing=2
]
\strand[thick,black]
(1,-1) to (5,-4);
\strand[thick,black]
(1,-2) to (5,-3);
\strand[thick,black]
(1,-3) to (5,-2);
\strand[thick,black]
(1,-4) to (5,-1);
\end{knot}
\draw (7,-4) node[left] {$s_4$};
\draw (7,-3) node[left] {$s_3$};
\draw (7,-2) node[left] {$s_2$};
\draw (7,-1) node[left] {$s_1$};
\draw [-stealth,thick](5.2,-2.5) -- (6.3,-2.5);
\begin{knot}[
consider self intersections,clip width=10,
flip crossing=3,
flip crossing=4,
flip crossing=5
]
\strand[thick,black]
(7,-1) to (9,-3) to (11,-4);
\strand[thick,black]
(7,-2)  to (9,-1) to (11,-3);
\strand[thick,black]
(7,-3) to (9, -4) to (11,-2);
\strand[thick,black]
(7,-4) to (9,-2) to (11,-1);
\end{knot}
\end{tikzpicture}
\caption{Star diagram with $\pi= \left(\begin{array}{cccc}
1 & 2 & 3 & 4  \\
3 & 1 & 4 & 2  \\
\end{array}\right)$.}\label{fig:1} 
\end{figure}

   Let $s_1, s_2, \ldots, s_n$ represent the $n$ strands in a star diagram, arranged naturally such that $s_1$ runs from the top-left to the bottom-right. Assume that the midpoint of each $s_i$ coincides with the location of the multi-crossing. Consider a permutation $\pi \in S_n$ where the $z$-coordinate of the midpoint of $s_i$ is greater than the $z$-coordinate of $s_j$ if and only if $\pi(i) > \pi(j)$. Then, the braid represented by the star diagram is uniquely determined by $\pi$.

    We vertically shift the midpoints of the strands such that the midpoint of $s_i$ is lower than that of $s_j$ if and only if $\pi(i) > \pi(j)$. The resulting braid diagram has positive crossings on the left and negative crossings on the right. Moreover, it represents the product\footnote{We adopt Thurston's convention that the braid product $ab$ starts with $b$.} of the inverse of a simple braid $\br(\pi_0)$ and the simple braid $\br(\pi)$. Since $\br(\pi)$ permutes the strands with respect to $\pi$, and $\br(\pi_0)^{-1}$ permutes the strands with respect to $\pi_0^{-1}$, we have
    \[\pi_0^{-1}\pi=\left(\begin{array}{cccc}
1 & 2 & \cdots & n  \\
n & n-1 & \cdots & 1  \\
\end{array}\right).\] 
Because $\mathrm{Inv}(\pi^{-1}\pi_0)$ contains $\mathrm{Inv}(\pi_0)$, we find that $\br(\pi^{-1})\br(\pi_0)=\Delta_n$. Therefore, the braid diagram represents the braid \[\br(\pi_0)^{-1}\br(\pi)=\Delta_n^{-1}\br(\pi^{-1})\br(\pi).\]
\end{proof}

Now we prove Theorem~\ref{main-1} by constructing a petal diagram of the braid closure.

\begin{proof}[Proof of Theorem~\ref{main-1}]
    According to Theorem~\ref{star_diagram}, the braids \[\Delta_n^{-1}\br(\pi_1^{-1})\br(\pi_1)\] and \[\Delta_n^{-1}\br(\pi_2^{-1})\br(\pi_2)\] admit star diagrams. Since the inverse of a braid admitting a star diagram also admits a star diagram, the braid \[\br(\pi_1)^{-1}\br(\pi_1^{-1})^{-1}\Delta_n\] also admits a star diagram. Thus, we can depict a braid diagram for \[\delta_n\br(\pi_1)^{-1}\br(\pi_1^{-1})^{-1} \br(\pi_2^{-1})\br(\pi_2) \] by concatenating two star diagrams and a standard diagram of the simple braid $\delta_n$ as shown in Figure~\ref{fig:2}.

\begin{figure}[!htbp] 
\centering
\begin{tikzpicture}

\begin{knot}[
consider self intersections,clip width=10
]
\strand[thick,black]
 (9,-1)to (11,-2);
\strand[thick,black]
(9,-2)to (11,-3);
\strand[thick,black]
(9,-3)to (11,-4);
\strand[thick,black]
(9,-4)to (11,-1);
\end{knot}
\draw[thick,black](1,-1) to (5,-4)to (9,-1);
\draw[thick,black](1,-2) to (5,-3)to (9,-2);
\draw[thick,black](1,-3) to (5,-2)to (9,-3);
\draw[thick,black](1,-4) to (5,-1)to (9,-4);
\end{tikzpicture}
\caption{Concatenation of two star diagrams and $\delta_n$.}\label{fig:2} 
\end{figure}
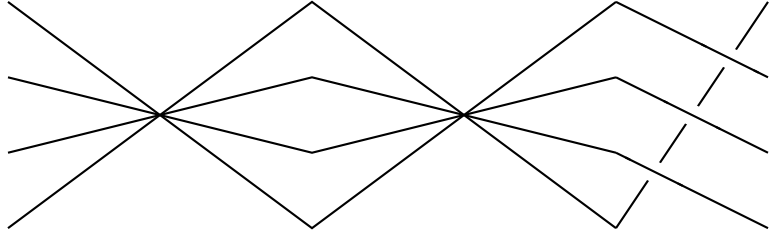

We fold the first star diagram outward to align it with the second one so that two multi-crossings coincide. Then, we close the braid. As illustrated in Figure~\ref{fig:3}, the $z$-coordinate of the blue strand is greater than that of every red strand and less than that of every black strand. Therefore, we can move the blue strand to the middle to obtain a petal diagram with $2n+1$ petals.

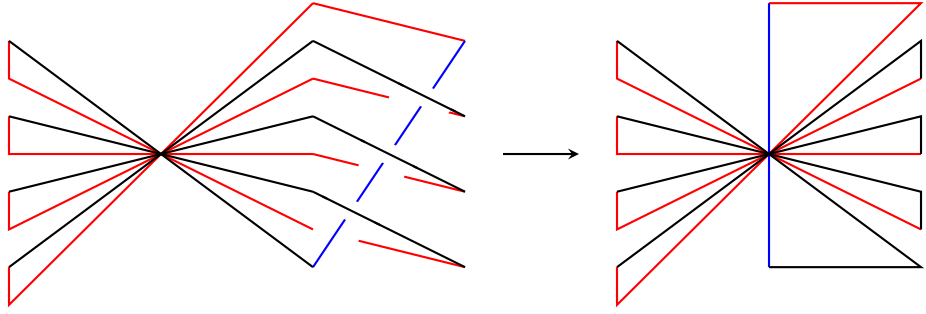
\begin{figure}[!htbp] 
\centering
\begin{tikzpicture}   

\draw[thick,red]
(5,-0.5) to (7,-1);
\draw[thick,red]
(5,-1.5) to (6,-1.75);
\draw[thick,red]
(6.6,-1.9) to (7,-2);
\draw[thick,red]
(5,-2.5) to (5.6,-2.65);
\draw[thick,red]
(6.2,-2.8) to (7,-3);
\draw[thick,red]
(5.6,-3.65) to (7,-4);

\begin{knot}[
consider self intersections,clip width=10,
flip crossing=1,
flip crossing=2,
flip crossing=3
]
\strand[thick,blue]
(5,-4) to (7,-1);
\strand[thick,black]
(5,-3) to (7,-4);
\strand[thick,black]
(5,-2) to (7,-3);
\strand[thick,black]
(5,-1) to (7,-2);
\end{knot}

\draw[thick,red]
(1,-1) to (1,-1.5) to (5,-3.5);
\draw[thick,red]
(1,-2) to (1,-2.5) to (5,-2.5);
\draw[thick,red]
(1,-3) to (1,-3.5) to (5,-1.5);
\draw[thick,red]
(1,-4) to (1,-4.5) to (5,-0.5);

\draw[thick,black](1,-1) to (5,-4);
\draw[thick,black](1,-2) to (5,-3);
\draw[thick,black](1,-3) to (5,-2);
\draw[thick,black](1,-4) to (5,-1);

\draw [-stealth,thick](7.5,-2.5) -- (8.5,-2.5);
\draw[thick,red]
(9,-1) to (9,-1.5) to (13,-3.5);
\draw[thick,red]
(9,-2) to (9,-2.5) to (13,-2.5);
\draw[thick,red]
(9,-3) to (9,-3.5) to (13,-1.5);
\draw[thick,red]
(9,-4) to (9,-4.5) to (13,-0.5)to (11,-0.5);
\draw[thick,blue]
(11,-4) to (11,-0.5);
\draw[thick,black](9,-1) to (13,-4)to (11,-4) ;
\draw[thick,black](9,-2) to (13,-3) to(13,-3.5);
\draw[thick,black](9,-3) to (13,-2) to(13,-2.5);
\draw[thick,black](9,-4) to (13,-1)to (13,-1.5);

\end{tikzpicture}
\caption{A petal diagram of the braid closure.}\label{fig:3} 
\end{figure}
\end{proof}

\section{Proof of Theorem~\ref{main-2}}\label{sec:3}
As shown in Figure~\ref{fig:4}, the standard braid diagram for the torus knot $T_{r,s}$ on $s$ strands can be represented as a product of two simple braids \[\br(\pi_1)\br(\pi_2),\]
where \[
\pi_1(i):=\begin{cases}
    r+1-i&\mbox{, if } 1\le i\le r,\\
    i &\mbox{, if } r+1\le i\le s,
\end{cases}
\]
and \[
\pi_2(i):=\begin{cases}
    r+i& \mbox{, if }1\le i\le s-r,\\
    s+1-i &\mbox{, if } s-r+1\le i\le s.
\end{cases}
\]
Then we have $\pi_1=\pi_1^{-1}.$
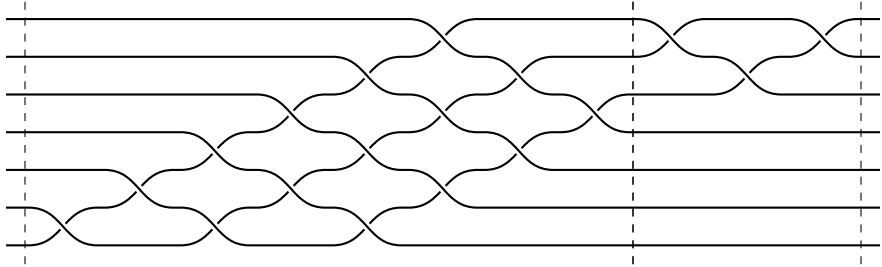
\begin{figure}[!htbp] 
\begin{center}
\begin{tikzpicture}[rotate=90,transform shape] 
\pic[braid/every floor/.style={draw=black,dashed},
braid/add floor={1,0,6,8,a},
braid/add floor={1,8,6,3,b},
braid/.cd,
every strand/.style={thick},width=0.5cm,
] {braid={ s_1^{-1} s_2^{-1} s_1^{-1}-s_3^{-1} s_4^{-1}-s_2^{-1} s_5^{-1}-s_3^{-1}-s_1^{-1} s_6^{-1}-s_4^{-1}-s_2^{-1} s_5^{-1}-s_3^{-1} s_4^{-1} s_6^{-1} s_5^{-1} s_6^{-1} }};
\end{tikzpicture}
\end{center}
\caption{Torus knot $T_{3,7}$ as the closure of the product of two simple braids.}\label{fig:4} 
\end{figure}
 
Because $r$ and $s$ are relatively prime, there exists a permutation $\pi_3\in S_s$ such that $\pi_3(i):=ri\pmod{s}$ for each $i=1, 2, \ldots, s$. Define the cyclic permutation $\pi_4$ as $\left(\begin{array}{ccccc}
1 & 2 & \cdots & s-1 & s  \\
2 & 3 & \cdots & s & 1  \\
\end{array}\right)$ in $S_s.$ We can observe that $\pi_3 \pi_4 = \pi_4^{r}\pi_3$ and $\pi_2= \pi_1\pi_4^r.$ By $\pi_3(s)=s$, the inversion set $\mathrm{Inv}(\pi_4 \pi_3^{-1})$ contains $\mathrm{Inv}(\pi_3^{-1})$. Therefore we have \[\br(\pi_4 \pi_3^{-1})=\br(\pi_4) \br(\pi_3^{-1})=\delta_s \br(\pi_3^{-1}).\]
By Markov's theorem, applying the conjugate by $\br(\pi_4 \pi_3^{-1})^{-1}$ does not alter the braid closure. Therefore, the closure of the braid
\[\delta_s\br(\pi_3^{-1})\br(\pi_1)\br(\pi_2)\br(\pi_4 \pi_3^{-1})^{-1}\] is $T_{r,s}$.

Define the permutation $\pi_5\in S_s$ by 
\[\pi_5^{-1}(i)=\begin{cases} 
 i &\mbox{, if } 1\le i\le r,\\
|\{\pi_3^{-1}(i)\ge \pi_3^{-1}(j): r+1\le j\le s\}|+r &\mbox{, if } r+1\le i\le s.
\end{cases}\] Then $\pi_1\pi_5=\pi_5\pi_1$. For each $r+1\le i,j\le s$, we have $\pi_3^{-1}(i)\le \pi_3^{-1}(j)$ if and only if $\pi_5^{-1}(i)\le \pi_5^{-1}(j)$. So $\mathrm{Inv}(\pi_5^{-1} \pi_2)$ contains $\mathrm{Inv}(\pi_3^{-1} \pi_4^{r})$, and we have
\[\br(\pi_5^{-1} \pi_2)= \br(\pi_5^{-1} \pi_2\pi_4^{-r}\pi_3)\br(\pi_3^{-1} \pi_4^{r}).\]
Because $\mathrm{Inv}(\pi_3^{-1})$ contains $\mathrm{Inv}(\pi_5^{-1})$, we have 
\[\br(\pi_3^{-1})=\br(\pi_3^{-1}\pi_5)\br(\pi_5^{-1}).\]
Because $\mathrm{Inv}(\pi_5^{-1}\pi_2)$ contains $\mathrm{Inv}(\pi_2)$, we have 
\[\br(\pi_5^{-1} \pi_2)=\br(\pi_5^{-1})\br(\pi_2).\]
Because $\mathrm{Inv}(\pi_5^{-1}\pi_1)$ contains $\mathrm{Inv}(\pi_1)$, we have 
\[\br(\pi_5^{-1} )=\br(\pi_5^{-1})\br(\pi_1).\]
Because $\mathrm{Inv}(\pi_1\pi_5^{-1})$ contains $\mathrm{Inv}(\pi_5^{-1})$, we have 
\[\br(\pi_1\pi_5^{-1})=\br(\pi_1)\br(\pi_5^{-1}).\]

Define the permutation $\pi_6\in S_s$ by 
\[\pi_6^{-1}(i)=\begin{cases} 
|\{\pi_3^{-1}(i)\ge \pi_3^{-1}(j): 1\le j\le r\}|&\mbox{, if }  1\le i\le r,\\
    i &\mbox{, if }, r+1\le i\le s.
\end{cases}\] 
 For each $1\le i,j\le r$, we have $\pi_3^{-1}(i)\le \pi_3^{-1}(j)$ if and only if $\pi_6^{-1}(i)\le \pi_6^{-1}(j)$. Because every pair $(i,j)$ with $1\le i<j\le r$ is in $\mathrm{Inv}(\pi_3^{-1}\pi_5\pi_1\pi_6)$, the inversion set $\mathrm{Inv}(\pi_3^{-1}\pi_5\pi_1\pi_6)$ contains $\mathrm{Inv}(\pi_1\pi_6)$, which implies
 \[\br(\pi_3^{-1}\pi_5\pi_1\pi_6) = \br(\pi_3^{-1}\pi_5)\br(\pi_1\pi_6).\]
 Similarly, the inversion set $\mathrm{Inv}(\pi_6^{-1}\pi_1^{-1}\pi_5^{-1}\pi_3)$ contains  $\mathrm{Inv}(\pi_1^{-1}\pi_5^{-1}\pi_3)$, which implies
 \[\br(\pi_6^{-1}\pi_1^{-1}\pi_5^{-1}\pi_3) = \br(\pi_6^{-1})\br(\pi_1^{-1}\pi_5^{-1}\pi_3).\]
Because $\mathrm{Inv}(\pi_1)$ contains $\mathrm{Inv}(\pi_6^{-1})$, we have \[\br(\pi_1)= \br(\pi_1\pi_6)\br(\pi_6^{-1}).\]

In summary, we have

\begin{align*}
    &\delta_s\br(\pi_3^{-1})\br(\pi_1)\br(\pi_2)\br(\pi_4 \pi_3^{-1})^{-1}\\
    =&\delta_s\br(\pi_3^{-1}\pi_5)\br(\pi_5^{-1})\br(\pi_1)\br(\pi_2)\br(\pi_4 \pi_3^{-1})^{-1}\\
    =&\delta_s\br(\pi_3^{-1}\pi_5)\br(\pi_5^{-1}\pi_1)\br(\pi_2)\br(\pi_4 \pi_3^{-1})^{-1}\\
    =&\delta_s\br(\pi_3^{-1}\pi_5)\br(\pi_1\pi_5^{-1})\br(\pi_2)\br( \pi_3^{-1}\pi_4^r)^{-1}\\
    =&\delta_s\br(\pi_3^{-1}\pi_5)\br(\pi_1)\br(\pi_5^{-1})\br(\pi_2)\br(\pi_3^{-1}\pi_4^r)^{-1}\\
    =&\delta_s\br(\pi_3^{-1}\pi_5)\br(\pi_1)\br(\pi_5^{-1}\pi_2)\br(\pi_3^{-1}\pi_4^r)^{-1}\\
    =&\delta_s\br(\pi_3^{-1}\pi_5)\br(\pi_1\pi_6)\br(\pi_6^{-1})\br(\pi_5^{-1}\pi_2)\br(\pi_3^{-1}\pi_4^r)^{-1}\\
    =&\delta_s\br(\pi_3^{-1}\pi_5\pi_1\pi_6)\br(\pi_6^{-1})\br(\pi_5^{-1}\pi_2\pi_4^{-r}\pi_3)\\
    =&\delta_s\br(\pi_3^{-1}\pi_5\pi_1\pi_6)\br(\pi_6^{-1})\br(\pi_1^{-1}\pi_5^{-1}\pi_3)\\
    =&\delta_s\br(\pi_3^{-1}\pi_5\pi_1\pi_6)\br(\pi_6^{-1}\pi_1^{-1}\pi_5^{-1}\pi_3).
\end{align*}

For each $s-\left\lfloor\frac{s}{r}\right\rfloor+1\le i\le s$, we have $\pi_3(i) =s - r(s-i)\ge r+1$. Thus we have $\pi_5^{-1}(s-r(s-i))=i$ for $s-\left\lfloor\frac{s}{r}\right\rfloor+1\le i\le s$. Because $s-\frac{s}{r}-r+1 = \frac{(s-r)(r-1)}{r}>0$, we have  $s-\left\lfloor\frac{s}{r}\right\rfloor+1\ge r+1$. Therefore, we have $\pi_6^{-1}\pi_1^{-1}\pi_5^{-1}\pi_3(i)=i$ for each $s-\left\lfloor\frac{s}{r}\right\rfloor+1\le i\le s$. Let $\pi_0\in S_{s-\left\lfloor\frac{s}{r}\right\rfloor}$ be the permutation with $\pi_0(i)=\pi_6^{-1}\pi_1^{-1}\pi_5^{-1}\pi_3(i)$ for each $1\le i\le s-\left\lfloor\frac{s}{r}\right\rfloor.$ Then the braid \[\delta_s\br(\pi_3^{-1}\pi_5\pi_1\pi_6)\br(\pi_6^{-1}\pi_1^{-1}\pi_5^{-1}\pi_3)\] is obtained from \[\delta_{s-\left\lfloor\frac{s}{r}\right\rfloor}\br(\pi_0^{-1})\br(\pi_0)\]
via stabilization moves. By Markov's theorem, the closure of latter braid is $T_{r,s}$. By Theorem~\ref{main-1}, the petal number of $T_{r,s}$ is at most $2s- 2\left\lfloor\frac{s}{r}\right\rfloor+1$.

\bibliographystyle{alpha}
\bibliography{ref}
\end{document}